\numberwithin{equation}{section}
\newcommand{\wconv}{\rightharpoonup}
\newcommand{\wsconv}{\mathrel{\vbox{\offinterlineskip\ialign{\hfil##\hfil\cr $\hspace{-0.4ex}\textnormal{\scriptsize{*}}$\cr \noalign{\kern-0.6ex}  $\rightharpoonup$\cr}}}}
\newcommand*\diff{\mathop{}\!\mathrm{d}}
\newcommand*\graph{\mathop{}\!\mathrm{graph}}
\newcommand{\R}{\mathbb{R}}
\newcommand{\N}{\mathbb{N}}
\newcommand{\T}{\tilde{T}}
\newcommand{\wlim}{\mathop{}\!\mathrm{w\!-\!\overline{lim}}\;\!}
\newtheorem{theorem}{Theorem}[section]
\newtheorem{lemma}[theorem]{Lemma}
\newdefinition{definition}[theorem]{Definition}
\newdefinition{remark}[theorem]{Remark}
\newdefinition{example}[theorem]{Example}
\newproof{proof}{Proof}
\begin{document}
\begin{frontmatter}
\title{Existence of strong solutions for the Oldroyd model with multivalued right-hand side\footnote{This work is part of project A8 within Collaborative Research Center 910
``Control of self-organizing nonlinear systems: Theoretical methods and concepts of application'' that is supported by Deutsche Forschungsgemeinschaft.
}}
\author{Andr\'e Eikmeier}
\ead{eikmeier@math.tu-berlin.de}
\address{Technische Universit{\"a}t Berlin, Institut f\"ur Mathematik, Stra{\ss}e des 17.~Juni 136, 10623 Berlin, Germany}%\\
%%%%%%%%%%%%%%%%%%%%%%%%%%%%%%%%%%%%%%%%%%%%%%%%%%%%%%%%

\begin{abstract}
The initial value problem for a coupled system is studied. The system consists of a differential inclusion and a differential equation and models the fluid flow of a viscoelastic fluid of Oldroyd type. The set-valued right-hand side of the differential inclusion satisfies certain measurability, continuity and growth conditions. The local existence (and global existence for small data) of a strong solution to the coupled system is shown using a generalisation of Kakutani's fixed-point theorem  and applying results from the single-valued case.
\end{abstract}
%%%%%%%%%%%%%%%%%%%%%%%%%%%%%%%%%%%%%%%%%%%%%%%%%%%%%%%%
\begin{keyword}
Oldroyd model \sep viscoelastic fluid \sep nonlinear evolution equation \sep multivalued differential equation \sep differential inclusion \sep existence \sep Kakutani fixed-point theorem
\MSC[2020]{47J35, 34G25, 35R70, 35Q35}
\end{keyword}

\end{frontmatter}
%%%%%%%%%%%%%%%%%%%%%%%%%%%%%%%%%%%%%%%%%%%%%%%%%%%%%%%%
%%%%%%%%%%%%%%%%%%%%%%%%%%%%%%%%%%%%%%%%%%%%%%%%%%%%%%%%
\section{Introduction} \label{introduction}
\subsection{Problem statement}

\noindent Let $T>0$ and let $\Omega\subset \R^3$ be open, bounded, and connected with $\partial\Omega\in \mathscr{C}^{2,\mu}$, $0<\mu<1$. We consider the Oldroyd model for a viscoelastic fluid in the multivalued version
%\begin{equation} \label{problem_single-valued}
%  \left\{\begin{aligned}
%    \text{Re} \left(\frac{\partial}{\partial t} u + (u\cdot\nabla) u\right) -(1-\alpha)\Delta u -\nabla\cdot \tau+ \nabla p &= f \phantom{02\alpha D(u)\tau_0}\hspace{-.3cm} \text{in }\Omega\times (0,T),\\
%    \nabla \cdot u &= 0 \phantom{2\alpha D(u)\tau_0f}\hspace{-.3cm} \text{in } \Omega\times(0,T),\\
%    \text{We}\left( \frac{\partial}{\partial t} \tau + (u\cdot\nabla)\tau + g_a(\tau,\nabla u)\right) + \tau &= 2\alpha D(u) \phantom{0\tau_0f}\hspace{-.3cm} \text{in } \Omega\times(0,T),\\
%    u &=0 \phantom{2\alpha D(u)\tau_0f}\hspace{-.3cm} \text{on } \partial\Omega\times(0,T),\\
%    \phantom{\int}u(\cdot,0)=u_0, \quad\quad \tau(\cdot,0)&=\tau_0 \phantom{2\alpha D(u)0f}\hspace{-.3cm} \text{in } \Omega,
%  \end{aligned}\right.
%\end{equation}
\begin{equation*} 
  \left\{\begin{aligned}
    \text{Re} \left(\partial_t u + (u\cdot\nabla) u\right) -(1-\alpha)\Delta u + \nabla p &\in \nabla\cdot \tau+F(\cdot ,u) \phantom{=02\alpha D(u)\tau_0f}\hspace{-1.5cm} \text{in }\Omega\times (0,T),\\
    \nabla \cdot u &= 0 \phantom{\in 2\alpha D(u)\tau_0\nabla\cdot \tau+fF(\cdot,u)}\hspace{-1.5cm} \text{in } \Omega\times(0,T),\\
    \text{We}\left( \partial_t \tau + (u\cdot\nabla)\tau + g_a(\tau,\nabla u)\right) + \tau &= 2\alpha D(u) \phantom{\in 0\tau_0\nabla\cdot \tau+fF(\cdot,u)}\hspace{-1.5cm} \text{in } \Omega\times(0,T),\\
    u &=0 \phantom{\in 2\alpha D(u)\tau_0\nabla\cdot \tau+fF(\cdot,u)}\hspace{-1.5cm} \text{on } \partial\Omega\times(0,T),\\
    u(\cdot,0)=u_0, \quad\quad \tau(\cdot,0)&=\tau_0 \phantom{\in 2\alpha D(u)0\nabla\cdot \tau+fF(\cdot,u)}\hspace{-1.5cm} \text{in } \Omega,
  \end{aligned}\right.
\end{equation*}
where $u\colon \overline{\Omega}\times [0,T]\to \R^3$ describes the velocity of the fluid, $\tau\colon \overline{\Omega}\times [0,T]\to \R^{3\times 3}$ describes the stress tensor of the fluid, and $p\colon \overline{\Omega}\times [0,T]\to \R$ describes the pressure in the fluid. Re is the Reynolds number, describing the relation between the inertial and viscous forces in the fluid, We is the Weissenberg number, describing the influence of elasticity on the fluid flow, and the parameter $\alpha\in (0,1)$ describes the influence of Newtonian viscosity on the fluid flow. Further, $a\in [-1,1]$ is a model parameter, and $g_a$ is given by
\begin{equation*}
  g_a(\tau,\nabla u) = \tau W(u) - W(u)\tau - a(D(u)\tau + \tau D(u))
\end{equation*}
with
\begin{equation*}
  D(u)=\frac{1}{2}(\nabla u + \nabla u^\top), \quad W(u)=\frac{1}{2}(\nabla u - \nabla u^\top).
\end{equation*}
The multivalued function $F$ fulfils certain measurability, continuity, and growth conditions that are given in detail in Section \ref{main_assumptions}. Finally, $u_0$ and $\tau_0$ are the given initial conditions.

Multivalued differential equations are used to, e.g., model feedback control problems. In this case, we can rewrite the first inclusion by introducing the single-valued right-hand side $f$ with
\begin{equation*}
  \begin{aligned}
    \text{Re} \left(\partial_t u + (u\cdot\nabla) u\right) -(1-\alpha)\Delta u + \nabla p &= \nabla\cdot \tau+f \phantom{\in F(\cdot,u)} \text{in }\Omega\times (0,T),\\
    f&\in F(\cdot,u) \phantom{= \nabla\cdot \tau+f } \text{in }\Omega\times (0,T),\\
  \end{aligned}
\end{equation*}
so we can consider $f$ as the control acting on the velocity $u$ and $F$ as the set of admissible controls, which in the case of a feedback control problem depends on the state $u$.

As already mentioned, the differential equation problem considered in this work relies on the so-called Oldroyd model for viscoelastic fluids, i.e., fluids that do not only show the common viscous behaviour, but also elastic behaviour. This model is used, e.g., for blood flow (see, e.g., Bilgi and Atal\i k~\cite{BilgiAtalik}, Bodn\'ar, Sequeira, and Prosi~\cite{BodSeqPro}, Smith and Sequeira~\cite{SmithSequeira}) or for polymer solutions, which appear, e.g., in microfluidic devices due to an enhanced mixing and heat transfer compared to Newtonian fluids (see, e.g., Arratia et al.~\cite{ArrThoDiGo}, Lund et al.~\cite{LundBrownEtAl}, Thomases and Shelley~\cite{ThomasesShelley}), or in applications containing drop formation of the fluid such as the prilling process or ink-jet printing (see, e.g., Alsharif and Uddin~\cite{AlsharifUddin}, Davidson, Harvie, and Cooper~\cite{DavHarCoo}).

\subsection{Literature overview}

This work is a combination of Fern\'andez-Cara, Guill\'en, and Ortega~\cite{FCGO02}, where the single-valued version of the problem above was considered, and Eikmeier and Emmrich \cite{EikEmm20}, where similar methods as in this article were used to prove existence of solutions to a multivalued differential equation problem with nonlocality in time.

The Oldroyd model is one of the most popular models for viscoelastic fluids and has risen a lot of attention in the last decades. Besides Fern\'andez-Cara, Guill\'en, and Ortega~\cite{FCGO02}, local existence and uniqueness of strong solutions has been proven in Guillop\'e and Saut~\cite{GuillopeSaut}, for a Besov space setting in Chemin and Masmoudi~\cite{CheminMasmoudi}, and, for a more general model, in Renardy~\cite{Renardy90}. As the Oldroyd model incorporates the usual Newtonian model, i.e., the Navier--Stokes equation, as a special case, global existence and uniqueness of weak solutions (in the three-dimensional case and for general initial data) have not yet been proven. For the special case of the Jeffreys model corresponding to $a=0$, global existence of weak solutions for general initial data was shown in Lions and Masmoudi~\cite{LionsMasmoudi}. For a more general constitutive equation including the Oldroyd model, but with stronger assumptions on the dissipation term in the balance of momentum, global existence of weak solutions for general initial data has been proven by Bejaoui and Majdoub~\cite{BejaouiMajdoub}.
%Even the global existence only has, up to the knowledge of the author, not yet been proven for the general Oldroyd model.

Multivalued differential equations (or differential inclusions) have been studied by several authors as well, see, e.g., Aubin and Cellina~\cite{AubinCellina}, Aubin and Frankowska~\cite{AubinFrankowska}, or Deimling~\cite{Deimling} for basic results including theory from set-valued analysis. Extensions of the results shown in Deimling~\cite{Deimling} can be found in O'Regan~\cite{ORegan}. Existence of global solutions to multivalued differential equation with a linear parabolic principal part and a relaxed one-sided Lipschitz nonlinear set-valued operator is, e.g., considered in Beyn, Emmrich, and Rieger~\cite{BeynEmmRie}.

Multivalued differential equations in the context of the Oldroyd model have only been considered, up to the knowledge of the author, in Obukhovski\u{\i}, Zecca, and Zvyagin~\cite{ObZecZvy} for the special case of the Jeffreys model. Existence of solutions is shown via the topological degree theory.
Results for other viscoelastic models like the Voigt model can be found in, e.g., Gori et al.~\cite{Gori_etal} and in Zvyagin and Kuzmin~\cite{ZvyaginKuzmin}.

\subsection{Organisation of the paper}

In Section~2, we introduce the basic notation and repeat some results from set-valued analysis that we use in this work. In Section~3, we state the assumptions on the set-valued right-hand side $F$ and a preliminary result needed for the following proofs of the main results. The first of these results about the local existence is then stated and proven in Section~4. Finally, in Section~5, we state and prove the global existence of solutions for small data.

\section{Basic notation and introduction to set-valued analysis}

Given a Banach space $X$, we denote its dual by $X^*$, the norm in $X$ by $\Vert \cdot\Vert_X$, the standard norm in $X^*$ by $\Vert \cdot\Vert_{X^*}$ and the duality pairing by $\langle\cdot,\cdot\rangle$. In the case of a Hilbert space $X$, we denote the inner product by $(\cdot,\cdot)$.

Let $\Omega\subset \R^d$, $d\in \N$, be Lebesgue measurable and let $1\leq p\leq \infty$. We denote the usual Lebesgue spaces by $L^p(\Omega)$, equipped with the standard norm. In the case $p<\infty$, the dual space of $L^p(\Omega)$ is given by $L^{p'}(\Omega)$ with the conjugate $p'=p/(p-1)$ for $p>1$ and $p'=\infty$ for $p=1$. Analogously, for a real, reflexive, separable Banach space $X$ and for $T>0$, we denote the usual Bochner--Lebesgue spaces by $L^p(0,T;X)$, equipped with the standard norm. Again, in the case $p<\infty$, the dual space of $L^p(0,T;X)$ is given by $L^{p'}(0,T;X^*)$. The duality pairing in this case is given by
\begin{equation*}
  \langle g,f\rangle = \int_0^T \langle g(t),f(t)\rangle\diff t,
\end{equation*}
see, e.g., Diestel and Uhl~\cite[Theorem~1 on p.~98, Corollary~13 on p.~76, Theorem~1 on p.~79]{DiestelUhl}. 

By $W^{k,p}(\Omega)$, $k\in\N$, we denote the usual Sobolev space of $k$-times weakly differentiable functions $u\in L^p(\Omega)$ with $D^\beta u\in L^p(\Omega)$, where $\beta \in \N^d$ is a multiindex of order $\vert\beta\vert\leq k$. The spaces are again equipped with the standard norm. By $W_0^{k,p}(\Omega)$, $p>1$, we denote the space of all functions $u\in W^{k,p}(\Omega)$ with $u=0$ on $\partial\Omega$, also equipped with the standard norm. Similarly, we denote by $W^{1,p}(0,T;X)$ the space of weakly differentiable functions $u\in L^p(0,T;X)$ with $u'\in L^p(0,T;X)$, equipped with the standard norm. We have the continuous embedding $W^{1,1}(0,T;X) \subset \mathscr{C}([0,T];X)$, where $\mathscr{C}([0,T];X)$ denotes the space of all functions on $[0,T]$ with values in $X$ that are continuous, see, e.g., Roub\'i\v{c}ek~\cite[Lemma 7.1]{Roubicek}. A function $u\in W^{1,1}(0,T;X)$ is even almost everywhere equal to a function $u\in \mathscr{AC}([0,T];X)$, i.e., a function on $[0,T]$ with values in $X$ that is absolutely continuous, see, e.g., Br\'ezis~\cite[Theorem 8.2]{Brezis}. The space of all functions on $[0,T]$ with values in $X$ that are continuously differentiable is denoted by $\mathscr{C}^1([0,T];X)$, and the space of all functions on $[0,T]$ that are continuous with respect to the weak topology in $X$ is denoted by $\mathscr{C}_w([0,T];X)$.

Next, we introduce a few definitions from set-valued analysis. Let $(\Omega, \Sigma)$ be a measurable space and let $X$ be a complete separable metric space. We denote the Lebesgue $\sigma$-algebra on the interval $[a,b]\subset \mathbb{R}$ by $\mathcal{L}([a,b])$ and the Borel $\sigma$-algebra on $X$ by $\mathcal{B}(X)$. Further, we denote the set of all nonempty and closed subsets $U\subset X$ by $\mathcal{P}_{f}(X)$, the set of all nonempty and convex subsets $U\subset X$ by $\mathcal{P}_{c}(X)$, and the set of all nonempty, closed, and convex subsets $U\subset X$ by $\mathcal{P}_{fc}(X)$.

Let $F\colon \Omega\to 2^X\setminus \{\emptyset\}$ be a set-valued function. We define the pointwise supremum
\begin{equation*}
  \vert F(\omega)\vert:=\sup \left\{\Vert x\Vert_X\mid x\in F(\omega)\right\}\!,\quad \omega\in \Omega,
\end{equation*}
and the graph of $F$
\begin{equation*}
  \graph(F)=\left\{ (\omega,x)\in \Omega\times X \mid x\in F(\omega)\right\}\!.
\end{equation*}
We call a function $F\colon \Omega\to \mathcal{P}_f(X)$ measurable if the preimage of each open set is measurable, i.e.,
\begin{equation*}
  F^{-1}(U):=\left\lbrace \omega\in \Omega\mid F(\omega)\cap U \neq \emptyset\right\rbrace \in \Sigma
\end{equation*}
for every open $U\subset X$. For equivalent definitions, see, e.g., Denkowski, Mig\'orski, and Papageorgiou~\cite[Theorem 4.3.4]{DenMigPapa}. For every measurable set-valued function, there exists a measurable selection, i.e., a $\Sigma$-$\mathcal{B}(X)$-measurable function $f\colon \Omega\to X$ with $f(\omega)\in F(\omega)$ for all $\omega\in \Omega$, see, e.g., Aubin and Frankowska~\cite[Theorem 8.1.3]{AubinFrankowska}.

Now, let $(\Omega, \Sigma, \mu)$ be a complete $\sigma$-finite measure space, let $X$ be a separable Banach space, let $F\colon \Omega \to \mathcal{P}_{f}(X)$ be a set-valued function, and let $p\in [1,\infty)$. By $\mathcal{F}^p$, we denote the set of all $p$-integrable selections of $F$, i.e.,
\begin{equation*}
  \mathcal{F}^p:=\left\{ f\in L^p(\Omega;X,\mu) \mid f(\omega)\in F(\omega)\ \text{a.e. in}\ \Omega\right\}\!,
\end{equation*}
where $L^p(\Omega;X,\mu)$ is the space of Bochner measurable, $p$-integrable functions with respect to $\mu$.\footnote{If $X$ is a separable Banach space, the Bochner measurability of $f$ coincides with the $\Sigma$-$\mathcal{B}(X)$-measurability, see, e.g., Amann and Escher~\cite[Chapter X, Theorem 1.4]{AmannEscher}, Denkowski, Mig\'orski, and Papageorgiou~\cite[Corollary~3.10.5]{DenMigPapa}, or Papageorgiou and Winkert~\cite[Theorem 4.2.4]{PapaWin}} If there exists a nonnegative function $m\in L^p(\Omega;\mathbb{R},\mu)$ such that $F(\omega)\subset m(\omega)\;\! B_X$ for $\mu$-almost all $\omega\in \Omega$, where $B_X$ denotes the open unit ball in $X$, we call $F$ integrably bounded. In this case, Lebesgue's theorem of dominated convergence implies that every measurable selection of $F$ is an element of $\mathcal{F}^p$. The integral of a set-valued function $F$ is defined by
\begin{equation*}
  \int_\Omega F \diff \mu := \left\{ \int_\Omega f \diff \mu \mid f\in \mathcal{F}^1\right\}\!.
\end{equation*}
Important properties of this integral can be found, e.g., in Aubin and Frankowska~\cite[Chapter~8.6]{AubinFrankowska}.

Now, let $F$ have an additional argument, i.e., $F\colon \Omega \times X \to \mathcal{P}_f(X)$, and let $v\colon \Omega \to X$. By $\mathcal{F}^p(v)$, we denote the set of all $p$-integrable selections of the mapping $\omega \mapsto F(\omega, v(\omega))$, i.e.,
\begin{equation*}
  \mathcal{F}^p(v):=\left\{ f\in L^p(\Omega;X,\mu) \mid f(\omega)\in F(\omega, v(\omega))\ \text{a.e. in}\ \Omega\right\}\!.
\end{equation*}

%\textcolor{red}{ we introduce some spaces ...}
%Since this work refers to Fern\'andez-Cara, Guill\'en, and Ortega \cite{FCGO02} for the single-valued case, we will use mostly the same notation. Let $\Omega\subset\R^d$, $d\in\N$, be open and bounded with a smooth boundary $\partial\Omega$, and let $1<r,s<\infty$. 

Finally, by $c$, we denote a generic positive constant.
    
\section{Main assumptions and preliminary results} \label{main_assumptions}

For the rest of this work, let $\Omega\subset \R^3$ be open, bounded, and connected with $\partial\Omega\in \mathscr{C}^{2,\mu}$, $0<\mu<1$. In order to formulate the problem, we first introduce certain function spaces for better readability. Since this work refers to Fern\'andez-Cara, Guill\'en, and Ortega \cite{FCGO02} for the single-valued case, we will mostly use the same notation. Let $1<r,s<\infty$. We define
\begin{equation*}
  \begin{aligned}
    H_r&= \left\{ u\in L^r(\Omega)^3 \mid \nabla\cdot u=0,\ u\cdot n= 0 \text{ on } \partial\Omega\right\}\!,\\
    V_r&= H_r\cap W_0^{1,r}(\Omega)^3=\left\{ u\in W_0^{1,r}(\Omega)^3 \mid \nabla\cdot u=0\right\}\!,
  \end{aligned}
\end{equation*}
where the divergence in the definition of $H_r$ is meant in the distributional sense and where $u\cdot n$ is meant in the sense of traces with $n$ denoting the outer normal unit vector to $\partial\Omega$. Further, by $P_r$, we denote the usual Helmholtz (or Helmholtz-Leray, Helmholtz-Weyl) projector $P_r\colon L^r(\Omega)^3\to H_r$, i.e., $P_r$ is linear and bounded with $P_r u=v$ where $v$ is given by the so-called Helmholtz decomposition
\begin{equation} \label{Helmholtz_decomp}
  u=v + \nabla w
\end{equation}
with $v\in H_r$ and $w\in W^{1,r}(\Omega)$, see, e.g., Galdi~\cite[Chapter III.1]{Galdi}. Based on this, by $A_r$, we denote the Stokes operator $A_r\colon D(A_r)\to H_r$ with the domain $D(A_r)=V_r\cap W^{2,r}(\Omega)^3$ and $A_r u=P_r(-\Delta u)$ for all $u\in D(A_r)$. Equipped with the norm 
\begin{equation*}
  \Vert u \Vert_{D(A_r)}= \Vert u \Vert_{H_r} + \Vert A_r u \Vert_{H_r},
\end{equation*}
$D(A_r)$ is a Banach space, see, e.g., Butzer and Berens~\cite[p.~11]{ButzerBerens}. We also introduce the space
\begin{equation*}
  D_r^s=\left\{ u\in H_r\ \bigg|\ \int_0^\infty \Vert{A_r e^{-tA_r}u}\Vert^s_{H_r}\diff t<\infty\right\}\!,
\end{equation*}
which is, equipped with the norm
\begin{equation*}
  \Vert u \Vert_{D_r^s} = \Vert u \Vert_{H_r} + \left(\int_0^\infty \Vert A_r \,e^{-tA_r}\,u \Vert_{H_r}^s \diff t\right)^{1/s},
\end{equation*}
again a Banach space, coinciding with a real interpolation space between $D(A_r)$ and $H_r$ and with the continuous and dense embeddings
\begin{equation} \label{embeddings_D_r^s}
  D(A_r)\subset D^s_r\subset H_r,
\end{equation}
see, e.g., Butzer and Berens~\cite[Chapter~III]{ButzerBerens}. As mentioned in Fern\'andez-Cara, Guill\'en, and Ortega \cite[p.~563]{FCGO02}, this space is a natural choice for the initial data $u_0$ of our differential inclusion problem if we are looking for a solution in $L^s(0,T;D(A_r))$, see also Giga and Sohr~\cite[pp.~77~f.]{GigaSohr}.

For simplicity, we still write $\partial_t u$, $\nabla u$, or $\nabla\cdot u$ for  abstract functions $u\colon [0,T]\to X$, where $X$ is a Banach space of functions mapping $\Omega$ to $\R$ (or $\R^3$, $\R^{3\times 3}$, respectively). Also, if there is no risk of confusion, we simply write, e.g., $L^r$ and $W^{1,r}$ for the spaces $L^r(\Omega)^3$ and $W^{1,r}(\Omega)^{3\times 3}$ and, e.g., $\Vert\cdot\Vert_{L^s(L^r)}$ for the norm of the space $L^s(0,T;L^r)$.
  
Now, let us state the assumptions on the set-valued right-hand side $F\colon [0,T]\times H_r \to \mathcal{P}_{fc}(L^r)$. We say that the assumptions {\textbf{(F)}} are fulfilled if
\begin{itemize}
  \item[\textbf{(F1)}] $F$ is measurable,
  \item[\textbf{(F2)}] for almost all $t\in(0,T)$, the graph of the mapping $v\mapsto F(t,v)$ is sequentially closed in $H_r\times L^r_w$, where $L^r_w$ denotes the Hilbert space $L^r$ equipped with the weak topology, and
  \item[\textbf{(F3)}] for almost all $t\in(0,T)$ and all $v\in H_r$, we have the estimate $$\vert{F(t,v)}\vert \leq b(t)\left(1 + \gamma\left(\Vert{v}\Vert_{H_r}\right)\right)$$ with $b\in L^s(0,T)$, $b\geq 0$ a.e., and $\gamma\colon[0,\infty)\to [0,\infty)$ a monotonically increasing function.
\end{itemize}

An example for $F$ with $\gamma(z)=c\;\!z^{2/s'}$, $c>0$, can be found in Eikmeier and Emmrich~\cite[Section~5]{EikEmm20}.  
%\begin{remark} \label{remark_F2}
%  The Assumption~\textbf{(F2)} can be seen as a continuity condition on the mapping $v\mapsto F(t,v)$: Let $(v_n)\subset H_r$, $v\in H_r$ with $v_n\to v$ in $H_r$. Now, if we have $(w_n)\subset L^r$, $w\in L^r$ such that $w_n\in F(t,v_n)$ and $w_n\wconv w$ in $L^r$, the Assumption \textbf{(F2)} yields $w\in F(t,v)$, i.e.,
%  \begin{equation*}
%    \wlim F(t,v_n)\subset F(t,v).
%  \end{equation*}
%\end{remark}
  In order to prove our main result, we need the following
\begin{lemma} \label{MeasurabilityNemytskii}
  Let $X$ and $Y$ be separable Banach spaces. If the set-valued mapping $G\colon [0,T]\times X\to P_f(Y)$ is measurable and the mapping $v\colon [0,T]\to X$ is Bochner measurable, then the set-valued Nemytsky mapping $\tilde{G}_v\colon [0,T]\to P_f(Y)$, $t\mapsto G(t,v(t))$, is measurable.
\end{lemma}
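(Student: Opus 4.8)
The plan is to characterise the measurability of set-valued maps through distance functions, reducing everything to a statement about real-valued, jointly measurable functions composed with the measurable map $t \mapsto (t,v(t))$.

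First, I would invoke the standard equivalence: for a complete separable metric space $Y$ and a measurable space $(\Omega,\Sigma)$, a map $H\colon \Omega \to \mathcal{P}_f(Y)$ is measurable if and only if $\omega \mapsto d_Y(y,H(\omega)) := \inf_{z\in H(\omega)} \Vert y-z\Vert_Y$ is $\Sigma$-measurable for every $y\in Y$ (Denkowski, Mig\'orski, and Papageorgiou~\cite[Theorem 4.3.4]{DenMigPapa}). Applied to $G$ over the measurable space $([0,T]\times X,\,\mathcal{L}([0,T])\otimes\mathcal{B}(X))$, this turns the hypothesis on $G$ into the assertion that, for each fixed $y\in Y$,
\begin{equation*}
  \varphi_y\colon [0,T]\times X \to [0,\infty), \qquad \varphi_y(t,x) := d_Y\bigl(y,G(t,x)\bigr),
\end{equation*}
is $\mathcal{L}([0,T])\otimes\mathcal{B}(X)$-measurable. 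Applying the same equivalence to $\tilde{G}_v$ over $([0,T],\mathcal{L}([0,T]))$, and noting that $\tilde{G}_v(t)=G(t,v(t))\in\mathcal{P}_f(Y)$ so that $\tilde{G}_v$ is genuinely $\mathcal{P}_f(Y)$-valued, it then suffices to show that $t\mapsto d_Y(y,\tilde{G}_v(t)) = \varphi_y(t,v(t))$ is $\mathcal{L}([0,T])$-measurable for every $y\in Y$.

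Next, I would introduce the map $\Phi\colon [0,T]\to [0,T]\times X$, $\Phi(t):=(t,v(t))$, and argue that it is $\mathcal{L}([0,T])$-$\bigl(\mathcal{L}([0,T])\otimes\mathcal{B}(X)\bigr)$-measurable. Indeed, since $X$ is separable, the Bochner measurability of $v$ coincides with its $\mathcal{L}([0,T])$-$\mathcal{B}(X)$-measurability (cf.\ the references in the footnote), while $t\mapsto t$ is trivially measurable; by the universal property of the product $\sigma$-algebra, a map into a product is measurable precisely when both of its components are, so $\Phi$ is measurable. Consequently $\varphi_y\circ\Phi$, being a composition of measurable maps, is $\mathcal{L}([0,T])$-measurable, and $\varphi_y\circ\Phi$ is exactly $t\mapsto\varphi_y(t,v(t))$, the function we needed. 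This finishes the proof.

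The hard part will be justifying the passage between the open-preimage definition of measurability used in the paper and the distance-function criterion, in particular checking that the cited equivalence is applicable over the product measurable space and that no completeness of the product measure is required (it is not, as the criterion is purely $\sigma$-algebraic). The remaining bookkeeping — lifting $v$ to the measurable map $\Phi$ into the product $\sigma$-algebra and composing with $\varphi_y$ — is then entirely routine.
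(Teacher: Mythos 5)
Your proof is correct and takes essentially the same route as the paper's own: the paper defers to Eikmeier and Emmrich~\cite[Lemma~2]{EikEmm20}, whose argument is precisely this reduction via the distance-function characterisation of measurability from Denkowski, Mig\'orski, and Papageorgiou~\cite[Theorem~4.3.4]{DenMigPapa}, composed with the measurable graph map $t\mapsto (t,v(t))$. Your two side remarks — that the distance-function criterion is purely $\sigma$-algebraic (so no completeness of a product measure is needed) and that separability of $X$ identifies Bochner measurability with $\mathcal{L}([0,T])$-$\mathcal{B}(X)$-measurability — are exactly the points that make the adaptation from $X=Y$ to $X\neq Y$ immediate, as the paper claims.
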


The proof to this lemma with $X=Y$ is given in Eikmeier and Emmrich~\cite[Lemma~2]{EikEmm20}, but it can easily be adapted to the case $X\neq Y$.

\section{Local existence}
  
  We are now able to state our main result about the local existence of strong solutions.
  
\begin{theorem} \label{thm_local}
  Let $\Omega\subset\R^3$ be open, bounded, and connected with $\partial\Omega\in \mathscr{C}^{2,\mu}$, $0<\mu<1$, let $T>0$ and let $u_0\in D^s_r$, $\tau_0\in W^{1,r}$ with $3<r<\infty$, $1<s<\infty$. Let $F\colon [0,T]\times H_r \to \mathcal{P}_{fc}(L^r)$ satisfy the assumptions \textbf{(F)}. Then there exists $T_*>0$ and 
  \begin{equation*}
    \begin{aligned}
      u&\in L^s(0,T_*; D(A_r)) \quad\text{with}\quad \partial_t u\in L^s(0,T_*; H_r),\\
      \tau &\in \mathscr C([0,T_*];W^{1,r})\quad\text{with}\quad \partial_t \tau\in L^s(0,T_*; L^r),\\
      p&\in L^s(0,T_*; W^{1,r}),\\
    \end{aligned}
  \end{equation*}
  such that $(u,\tau,p)$ is a solution to 
  \begin{equation} \label{problem_multivalued}
    \left\{\begin{aligned}
      \mathrm{Re} \left(\partial_t u + (u\cdot\nabla) u\right) -(1-\alpha)\Delta u - \nabla\cdot \tau+ \nabla p &\in F(\cdot ,u) \phantom{=02\alpha D(u)\tau_0f}\hspace{-1.5cm} \text{in } (0,T_*),\\
      \mathrm{We}\left( \partial_t \tau + (u\cdot\nabla)\tau + g_a(\tau,\nabla u)\right) + \tau &= 2\alpha D(u) \phantom{\in 0\tau_0fF(\cdot,u)}\hspace{-1.5cm} \text{in } (0,T_*),\\
      u(0)=u_0, \quad\quad \tau(0)&=\tau_0,
    \end{aligned}\right.
  \end{equation}  
  i.e., a solution to the the single-valued problem 
  \begin{equation} \label{abstract_problem_single-valued}
    \left\{\begin{aligned}
      \mathrm{Re} \left(\partial_t u + (u\cdot\nabla) u\right) -(1-\alpha)\Delta u -\nabla\cdot \tau+ \nabla p &= f \phantom{02\alpha D(u)\tau_0}\hspace{-.3cm} \text{in } (0,T_*),\\
      \mathrm{We}\left( \partial_t \tau + (u\cdot\nabla)\tau + g_a(\tau,\nabla u)\right) + \tau &= 2\alpha D(u) \phantom{0\tau_0f}\hspace{-.3cm} \text{in } (0,T_*),\\
      u(0)=u_0, \quad\quad \tau(0)&=\tau_0,
  \end{aligned}\right.
  \end{equation}
  where $f\in L^s(0,T_*; L^r)$ with $f(t)\in F(t,u(t))$ a.e. in $(0,T_*)$.
\end{theorem}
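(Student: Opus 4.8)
The plan is to reformulate \eqref{problem_multivalued} as a fixed-point problem for a set-valued map acting on the space of right-hand sides, and to apply the Kakutani--Fan--Glicksberg generalisation of Kakutani's fixed-point theorem. Let $S$ denote the single-valued solution operator supplied by Fern\'andez-Cara, Guill\'en, and Ortega~\cite{FCGO02}: for the data $u_0\in D^s_r$, $\tau_0\in W^{1,r}$ and any $f\in L^s(0,T_*;L^r)$, the problem \eqref{abstract_problem_single-valued} possesses, on a sufficiently short interval, a unique strong solution $(u,\tau,p)$ in the asserted regularity classes, and I set $Sf:=u$. I then consider
\[
  \Phi(f):=\mathcal{F}^s(Sf)=\{\, g\in L^s(0,T_*;L^r) : g(t)\in F(t,(Sf)(t)) \ \text{a.e. in } (0,T_*)\,\}.
\]
A fixed point $f\in\Phi(f)$ satisfies $f(t)\in F(t,u(t))$ a.e.\ with $u=Sf$, and hence furnishes exactly a solution of \eqref{problem_multivalued}. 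The reason for carrying out the fixed-point argument in the variable $f$, rather than in $u$, is that the values $\Phi(f)$ then inherit convexity directly from the convex-valuedness of $F$ (recall $F$ maps into $\mathcal{P}_{fc}(L^r)$), whereas the image under the nonlinear operator $S$ would in general not be convex.

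First I would fix the functional setting. Since $1<s,r<\infty$, the space $L^s(0,T_*;L^r)$ is separable and reflexive, so a closed ball $K=\{\,f : \Vert f\Vert_{L^s(L^r)}\le R\,\}$ is convex and weakly compact, and the weak topology on $K$ is metrisable; sequential arguments therefore suffice. Using assumption \textbf{(F1)} and Lemma~\ref{MeasurabilityNemytskii} (so that $t\mapsto F(t,(Sf)(t))$ is measurable and admits a measurable selection, $Sf$ being continuous, hence Bochner measurable, into $H_r$) together with the growth bound \textbf{(F3)} (which renders every such selection $s$-integrable), the values $\Phi(f)$ are nonempty; they are convex and strongly closed in $L^s(0,T_*;L^r)$, hence weakly closed. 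To obtain a self-map, I would use \textbf{(F3)}, the monotonicity of $\gamma$, and the a priori estimates of the single-valued theory, which provide uniformly for $f\in K$ a common existence time and a bound $\Vert Sf\Vert_{\mathscr C([0,T_*];H_r)}\le\rho$; then every $g\in\Phi(f)$ obeys $\Vert g(t)\Vert_{L^r}\le b(t)\bigl(1+\gamma(\rho)\bigr)$ and thus $\Vert g\Vert_{L^s(L^r)}\le\bigl(1+\gamma(\rho)\bigr)\Vert b\Vert_{L^s(0,T_*)}$, which is $\le R$ once $T_*$ is chosen small. Hence $\Phi$ maps $K$ into itself with nonempty, convex, weakly closed values.

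The heart of the proof is the upper semicontinuity of $\Phi$ on $K$ in the weak topology, which by weak compactness and metrisability amounts to sequential closedness of the graph. So I would take $f_n\wconv f$ and $g_n\in\Phi(f_n)$ with $g_n\wconv g$ in $L^s(0,T_*;L^r)$ and show $g\in\Phi(f)$, in two steps. The first is continuous dependence for $S$: the uniform a priori bounds keep $u_n:=Sf_n$ bounded in $L^s(0,T_*;D(A_r))$ with $\partial_t u_n$ bounded in $L^s(0,T_*;H_r)$, and analogously the stresses $\tau_n$; by the Aubin--Lions--Simon lemma a subsequence converges, $u_n\to\bar u$ strongly in $\mathscr C([0,T_*];H_r)$, and passing to the limit in \eqref{abstract_problem_single-valued} — where the strong convergence is precisely what controls the quadratic terms $(u_n\cdot\nabla)u_n$ and $g_a(\tau_n,\nabla u_n)$, while $f_n\wconv f$ handles the linear right-hand side — identifies $\bar u$ as a solution for $f$, so $\bar u=Sf$ by uniqueness, and a standard subsequence argument yields convergence of the whole sequence. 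The second, and the step I expect to be the main obstacle, is the set-valued passage to the limit: from $g_n(t)\in F(t,u_n(t))$ a.e., $u_n(t)\to(Sf)(t)$ in $H_r$ a.e., and $g_n\wconv g$, I must deduce $g(t)\in F(t,(Sf)(t))$ a.e.\ using only the sequential graph-closedness \textbf{(F2)} in $H_r\times L^r_w$ and the convexity of the values. Here Mazur's lemma turns the weak convergence of $(g_n)$ into a.e.\ convergence of suitable convex combinations, and a separation argument against the closed convex set $F(t,(Sf)(t))$ — comparing $\langle\xi,g_n(t)\rangle$ with the support function of $F(t,u_n(t))$ and invoking \textbf{(F2)} — closes the inclusion; this is carried out by the same device as in Eikmeier and Emmrich~\cite{EikEmm20}. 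With all hypotheses verified, the Kakutani--Fan--Glicksberg theorem provides a fixed point $f\in\Phi(f)$, whose associated strong solution $(u,\tau,p)$ of \eqref{abstract_problem_single-valued} then solves \eqref{problem_multivalued}.
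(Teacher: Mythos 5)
Your proposal is correct in substance, but it takes a genuinely different route from the paper, so a comparison is in order. The paper fixed-points over the \emph{state}: it defines a map $\Phi$ on a set $\mathcal{Y}(T_*)$ of pairs $(\tilde u,\tilde\tau)$ by solving the \emph{linearised}, frozen-coefficient problems \eqref{linearised_equation_u}--\eqref{linearised_equation_tau} with an arbitrary selection $f\in\mathcal{F}^s(\tilde u)$; convexity of the values then comes from the linearity of these problems together with the convexity of $\mathcal{F}^s(\tilde u)$, compactness comes from the pair $\mathcal{Y}(T_*)\subset\mathcal{X}(T_*)$ exactly as in the single-valued scheme of \cite{FCGO02}, and only the \emph{linear} auxiliary results of \cite{FCGO02} (Lemmas~10.1 and~10.3) are invoked; the set-valued limit passage is handled via Papageorgiou's theorem on the weak Kuratowski limit superior \cite{Papageorgiou87}, which is interchangeable with your Mazur/support-function device from \cite{EikEmm20}. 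You instead fixed-point over the \emph{selection} $f$, composing with the full nonlinear solution operator $S$ of \eqref{abstract_problem_single-valued}. This buys you immediate convexity of $\Phi(f)=\mathcal{F}^s(Sf)$ and a clean topological setting (a weakly compact ball $K$ of the reflexive separable space $L^s(0,T_*;L^r)$, on which the weak topology is metrisable), but it costs three ingredients that the paper's linearisation avoids and that you should make explicit: (i) \emph{uniqueness} of strong solutions of the nonlinear single-valued problem, without which $S$ is not single-valued and your whole-sequence argument fails --- this is available in \cite{FCGO02}, but it is an extra input; (ii) an existence time \emph{uniform over the ball} $K$ --- the local existence theorem of \cite{FCGO02} produces a $T_*$ for a fixed $f$, and you must verify, essentially by re-running their a priori estimates (which is what the paper does anyway to get \eqref{a_priori_u_prelim}--\eqref{a_priori_dt_tau}), that $T_*$ depends on $f$ only through $\Vert f\Vert_{L^s(L^r)}\le R$, and you must also untangle the circular choice of $R$, $\rho$ and $T_*$; your remark that $\Vert b\Vert_{L^s(0,T_*)}\to 0$ as $T_*\to 0$ while $\rho\le \Vert u_0\Vert_{H_r}+T_*^{1/s'}\cdot(\text{uniform bound})$ stays controlled does resolve this, but it deserves a line of proof; (iii) weak-to-strong sequential continuity of $S$, which your Aubin--Lions-plus-uniqueness argument delivers, but note that this amounts to redoing, for the harder \emph{quadratic} terms $(u_n\cdot\nabla)u_n$, $(u_n\cdot\nabla)\tau_n$, $g_a(\tau_n,\nabla u_n)$, the same term-by-term limit passages the paper carries out for the bilinear frozen-coefficient versions. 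In short, both proofs run on the Kakutani--Fan--Glicksberg theorem and the same (F1)--(F3) mechanics; the paper's linearised map makes convexity and the self-map estimates elementary at the price of a larger fixed-point variable $(u,\tau)$, while your formulation concentrates all nonlinear analysis in the operator $S$, whose required properties (uniqueness, uniform existence time, weak-strong continuous dependence) go somewhat beyond the literal statements in \cite{FCGO02} and must be certified by inspecting or reproving their estimates.
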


\begin{proof}
  For $\T>0$, we introduce the spaces
  \begin{equation*}
    \begin{aligned}
      \mathcal{U}(\T)&:= \left\{ u\in L^s(0,\T;D(A_r))\mid \partial_t u\in L^s(0,\T;H_r)\right\},\\
      \mathcal{T}(\T)&:= \left\{ \tau\in L^\infty(0,\T;W^{1,r})\mid \partial_t \tau\in L^s(0,\T;L^r)\right\},
    \end{aligned}
  \end{equation*}
  and
  \begin{equation*}
    \mathcal{W}(\T):= \mathcal{U}(\T) \times \mathcal{T}(\T)
  \end{equation*}
  with the norm
  \begin{equation*}
    \Vert{(u,\tau)}\Vert_{\mathcal{W}(\T)}:= \Vert{u}\Vert_{L^s(D(A_r))}+\Vert{\partial_t u}\Vert_{L^s(H_r)} + \Vert{\tau}\Vert_{L^\infty(W^{1,r})} + \Vert{\partial_t \tau}\Vert_{L^s(L^r)}.
  \end{equation*}
  For $R_i>0$, $i=1,2,3$, let 
  \begin{equation} \label{Y(T)}
    \begin{aligned}
      \mathcal{Y}(\T):= \{ (u,\tau)\in \mathcal{W}(\T) \mid\ &u(0)=u_0, \quad \tau(0)=\tau_0,\\
      & \!\Vert{u}\Vert_{L^s(D(A_r))}^s+ \Vert{\partial_t u}\Vert^s_{L^s(H_r)}\leq R_1^s,\\
      &\!\left.\! \Vert{\tau}\Vert_{L^\infty(W^{1,r})} \leq R_2, \quad \Vert{\partial_t \tau}\Vert_{L^s(L^r)} \leq R_3 \right\}.
    \end{aligned}
  \end{equation}
  There exists $c_1>0$, depending on Re, $r$, $s$, and $\Omega$, such that for
  \begin{equation} \label{estimate_R_1_R_2}
    R_1\geq \frac{c_1}{1-\alpha} \Vert u_0\Vert_{D^s_r}, \quad R_2\geq \Vert \tau_0\Vert_{W^{1,r}},
  \end{equation}
  the set $\mathcal{Y}(\T)$ is nonempty for all $\T>0$, see the proof for the single-valued case in Fern\'andez-Cara, Guill\'en, and Ortega~\cite[p.~570]{FCGO02}. Let also
  \begin{equation*}
    \mathcal{X}(\T):= L^s(0,\T; V_r)\times \mathscr{C}([0,\T];L^r).
  \end{equation*}
  Now, for $0<\T\leq T$, let $$\Phi\colon \mathcal{Y}(\T)\to \mathcal{P}_c(\mathcal{X}(\T))$$ with $$(u,\tau)\in \Phi(\tilde{u}, \tilde{\tau}), \quad (\tilde{u}, \tilde{\tau})\in \mathcal{Y}(\T),$$ iff $u\in \mathcal{U}(\T)$ is a solution to
  \begin{equation}\label{linearised_equation_u}
    \left\{\begin{aligned}
      \text{Re}\;\! \partial_t u +(1-\alpha)\;\!A_r u &\in P_r\left(-\text{Re}\;\! (\tilde{u}\cdot\nabla)\tilde{u} + \nabla\cdot \tilde{\tau} + F(\cdot,\tilde{u})\right) \phantom{=0u_0}\hspace{-.5cm}\text{in } (0,\T),\\
      u(0)&=u_0,
    \end{aligned}\right.
  \end{equation}        
%      \nabla \cdot u &= 0  \phantom{\in P_r\left(-\text{Re}\;\! (\tilde{u}\cdot\nabla)\tilde{u} + \nabla\cdot \tilde{\tau} + F(\tilde{u})\right)u_0}\hspace{-.5cm}\text{in } \Omega\times (0,T),\\
%      u &=0  \phantom{\in P_r\left(-\text{Re}\;\! (\tilde{u}\cdot\nabla)\tilde{u} + \nabla\cdot \tilde{\tau} + F(\tilde{u})\right)u_0}\hspace{-.5cm}\text{on } \partial\Omega\times (0,T),\\
  and $\tau\in \mathcal{T}(\T)$ is a solution to 
  \begin{equation}\label{linearised_equation_tau}
    \left\{\begin{aligned}        
      \text{We}\left( \partial_t \tau + (\tilde{u}\cdot\nabla)\tau + g_a(\tau,\nabla\tilde{u})\right) + \tau &= 2\alpha D(\tilde{u}) \phantom{\tau_0} \text{in } (0,\T) ,\\
      \tau(0)&=\tau_0 .
    \end{aligned}\right.
  \end{equation}
  Since $\tilde{u}$ and $\tilde{\tau}$ are fixed, the two systems~\eqref{linearised_equation_u} and~\eqref{linearised_equation_tau} are linear in $u$ and $\tau$, respectively. First, we show that a fixed point of $\Phi$, i.e., a pair $(u,\tau)\in \mathcal{Y}(\T)$ with $(u,\tau)\in \Phi(u,\tau)$, implies the existence of a solution $(u,\tau,p)$ to \eqref{problem_multivalued}: Let $(u,\tau)\in \mathcal{Y}(\T)$ be a fixed point of $\Phi$. Then, there exists $f\in \mathcal{F}^s(u)$ such that 
  \begin{equation*}
    \text{Re}\;\! \partial_t u +(1-\alpha)\;\!A_r u = P_r\left(-\text{Re}\;\! (u\cdot\nabla)u + \nabla\cdot \tau + f\right) \phantom{=0u_0}\hspace{-.5cm}\text{in } (0,\T).
  \end{equation*}
  Since $\partial_t u\in L^s(0,T;H_r)$, we have $P_r(\partial_t u)= \partial_t u$. Together with the definition of the Stokes operator $A_r$, we obtain
  \begin{equation*}
    P_r\left(\text{Re}\;\! \partial_t u - (1-\alpha) \Delta u + \text{Re}\;\! (u\cdot\nabla)u - \nabla\cdot \tau - f\right)=0 \phantom{=0u_0}\hspace{-.5cm}\text{in } (0,\T).
  \end{equation*}
  Now, the Helmholtz decomposition, see~\eqref{Helmholtz_decomp}, implies that, for almost all $t\in (0,\T)$, there exists $p(t)\in W^{1,r}$ such that 
  \begin{equation*}
    \text{Re}\;\! \partial_t u - (1-\alpha) \Delta u + \text{Re}\;\! (u\cdot\nabla)u - \nabla\cdot \tau - f = \nabla p\phantom{=0u_0}\hspace{-.5cm}\text{in } (0,\T).
  \end{equation*}
  Due to the regularity of $u$, $\tau$, and $f$, we obtain $p\in L^s(0,\T;W^{1,r})$. Finally, it is easy to see that $\tau$ solves the second equation in~\eqref{problem_multivalued}, so $(u,\tau,p)$ is a solution to~\eqref{problem_multivalued}.
  
  In order to show the existence of a fixed point, we want to apply the generalisation of Kakutani's fixed-point theorem (see Glicksberg~\cite{Glicksberg} and Fan~\cite{Fan}), i.e., we have to show that there exists $T_*$ with $0<T_*\leq T$ such that $\mathcal{Y}(T_*)$ is nonempty, convex, and compact in $\mathcal{X}(T_*)$, that $\Phi$ maps $\mathcal{Y}(T_*)$ into convex subsets of $\mathcal{Y}(T_*)$, and that $\Phi$ is closed, i.e., its graph is closed in $\mathcal{X}(T_*)\times \mathcal{X}(T_*)$.
  
  First, we show that $\Phi$ is well-defined, i.e., that for all $(\tilde{u},\tilde{\tau})\in \mathcal{Y}(\T)$, the set $\Phi(\tilde{u},\tilde{\tau})$ is nonempty and convex in $\mathcal{X}(\T)$. Due to $\tilde{u}\in \mathcal{U}(\T)$, we have $\tilde{u}\in \mathscr{AC}([0,\T];H_r)$, i.e., $\tilde{u}$ is Bochner measurable and the mapping $t\mapsto \Vert \tilde{u}(t)\Vert_{H_r}$ is bounded. Following Lemma \ref{MeasurabilityNemytskii}, the mapping $t\mapsto F(t, \tilde{u}(t))$ is measurable and there exists a measurable selection $f$. Due to Assumption \textbf{(F3)}, we have
  \begin{equation*}
    \Vert f(t)\Vert_{L^r}\leq b(t)\left( 1 + \gamma\left(\Vert \tilde{u}(t)\Vert_{H_r}\right)\right)\leq b(t)\left( 1+\gamma(c)\right)
  \end{equation*}    
  for some $c>0$ and for almost all $t\in (0,\T)$, implying $f\in L^s(0,\T; L^r)$ (due to $b\in L^s(0,T)$) and thus $f\in \mathcal{F}^s(\tilde{u})$. Now, we can apply Fern\'andez-Cara, Guill\'en, and Ortega~\cite[Lemma~10.1]{FCGO02} to obtain a (unique) solution $u\in \mathcal{U}(\T)$ to the single-valued problem
  \begin{equation} \label{linearised_equation_u_short}
    \left\{\begin{aligned}
      \text{Re}\;\! \partial_t u +(1-\alpha)\;\!A_r u &=P_r\left(-\text{Re}\;\! (\tilde{u}\cdot\nabla)\tilde{u} + \nabla\cdot \tilde{\tau} + f\right) \phantom{=0u_0}\hspace{-.5cm}\text{in } (0,\T),\\
      u(0)&=u_0.
    \end{aligned}\right.
  \end{equation}
  Also, following Fern\'andez-Cara, Guill\'en, and Ortega~\cite[Lemma~10.3]{FCGO02}, there exists a (unique) solution $\tau\in \mathcal{T}(\T)$ to \eqref{linearised_equation_tau}. Therefore, $\Phi(\tilde{u},\tilde{\tau})$ is nonempty. In order to show the convexity of $\Phi(\tilde{u},\tilde{\tau})$, let $(\tilde{u}, \tilde{\tau})\in \mathcal{Y}(\T)$, $\lambda\in(0,1)$, and $(u_1, \tau_1),(u_2,\tau_2)\in \Phi(\tilde{u}, \tilde{\tau})$. Therefore, there exist $f_1,f_2\in \mathcal{F}^s(\tilde{u})$ such that $u_i$ is a solution to the single-valued problem~\eqref{linearised_equation_u_short} with $f_i$ instead of $f$ on the right-hand side, $i=1,2$. Now, the linearity of problem~\eqref{linearised_equation_u_short} in $u$ (remember that $\tilde{u}$ is fixed) implies that $\lambda u_1+ (1-\lambda)u_2$ is a solution to problem~\eqref{linearised_equation_u_short} with $\lambda f_1 +(1-\lambda)f_2$ on the right-hand side. Since $F$ is convex-valued, it is easy to show that the set $\mathcal{F}^s(\tilde{u})$ is convex as well. Thus, we have $\lambda f_1 +(1-\lambda)f_2\in \mathcal{F}^s(\tilde{u})$. As a consequence, $\lambda u_1+ (1-\lambda)u_2$ is a solution to problem~\eqref{linearised_equation_u}. Also, due to the linearity of problem~\eqref{linearised_equation_tau} in $\tau$, $\lambda\tau_1+(1-\lambda)\tau_2$ is a solution to this problem. Overall, this implies $\lambda (u_1,\tau_1)+(1-\lambda)(u_2,\tau_2)\in \Phi(\tilde{u}, \tilde{\tau})$, i.e., $\Phi$ is convex-valued. Therefore, $\Phi$ is well-defined.
   
  For all $\T>0$, the set $\mathcal{Y}(\T)$ is a nonempty, convex, and compact subset of $\mathcal{X}(\T)$, see the proof for the single-valued case in Fern\'andez-Cara, Guill\'en, and Ortega~\cite[Theorem~9.1]{FCGO02}. Next, we have to show that there exist $T_*,R_1, R_2,R_3>0$ such that
  \begin{equation*}
    \Phi\colon \mathcal{Y}(T_*)\to \mathcal{P}_c(\mathcal{Y}(T_*)).
  \end{equation*}
  We approach similarly to the proof for the single-valued case, but we have to include the estimates on our set-valued operator $F$. Let $(u,\tau)\in \Phi(\tilde{u}, \tilde{\tau})$ for an arbitrary $(\tilde{u}, \tilde{\tau})\in \mathcal{Y}(\T)$ and let $f\in \mathcal{F}^s(\tilde{u})$ such that $u$ solves the single-valued problem~\eqref{linearised_equation_u_short} with $f$. Following Fern\'andez-Cara, Guill\'en, and Ortega~\cite[p.~571]{FCGO02}, an a priori estimate for the solution to the single-valued problem~\eqref{linearised_equation_u_short} (cf. Fern\'andez-Cara, Guill\'en, and Ortega~\cite[Lemma~10.1]{FCGO02}) yields
  \begin{equation} \label{a_priori_u_prelim}
    \begin{aligned}
      \Vert u \Vert_{L^s(D(A_r))}^s + \Vert \partial_t u \Vert_{L^s(H_r)}^s &\leq \left(\frac{c_1}{1-\alpha}\right)^s\left( \Vert u_0 \Vert_{D^s_r}^s + c_2 \left( \Vert u_0 \Vert_{H_r}^{3s(r-1)/(2r)} R_1^{s(r+3)/(2r)}\T^{(r-3)/(2r)}   \right.\right. \\
      &\hspace{3cm} \left.\left. + R_1^{2s} \T^{3s(r-1)/(2r)-1} + R_2^s\T +\Vert f \Vert_{L^s(L^r)}^s \right)\right)
    \end{aligned}
  \end{equation}
  with $c_2>0$ depending on Re, $r$, $s$, and $\Omega$. Using Assumption \textbf{(F3)}, we obtain
  \begin{equation} \label{estimate_f}
    \begin{aligned}
      \Vert f \Vert_{L^s(L^r)}^s &= \int_0^{\T} \Vert f(t)\Vert_{L^r}^s\diff t\\
      &\leq \int_0^{\T} b(t)^s\left( 1 + \gamma\!\left(\Vert \tilde{u}(t) \Vert_{H_r}\right) \right)^s \diff t.\\
    \end{aligned}
  \end{equation}
  As mentioned before, we have $u\in \mathscr{AC}(0,\T;H_r)$ and in particular $u\in L^\infty(0,\T;H_r)$. Hölder's inequality yields
  \begin{equation} \label{estimate_L_infty_H_r}
    \Vert \tilde{u} \Vert_{L^\infty(H_r)} \leq \Vert u_0 \Vert_{H_r} + \Vert \partial_t \tilde{u} \Vert_{L^1(H_r)}\leq \Vert u_0 \Vert_{H_r} + \T^{1/s'} \Vert \partial_t \tilde{u} \Vert_{L^s(H_r)}.
  \end{equation}
  Together with \eqref{estimate_f} and with the monotonicity of $\gamma$, we have
  \begin{equation*}
    \begin{aligned}
      \Vert f \Vert_{L^s(L^r)}^s &\leq \Vert b \Vert_{L^s(0,T)}^s \left( 1+\gamma\! \left( \Vert u_0 \Vert_{H_r} + \T^{1/s'} \Vert \partial_t \tilde{u} \Vert_{L^s(H_r)} \right)\right)^s ,
    \end{aligned}
  \end{equation*}
  so $\tilde{u}\in \mathcal{Y}(\T)$ implies
  \begin{equation} \label{estimate_f_2}
    \begin{aligned}
      \Vert f \Vert_{L^s(L^r)}^s &\leq \Vert b \Vert_{L^s(0,T)}^s \left( 1 + \gamma\! \left( \Vert u_0 \Vert_{H_r} + R_1 \T^{1/s'} \right)\right)^s.
    \end{aligned}
  \end{equation}
  Combined with \eqref{a_priori_u_prelim}, we end up with
  \begin{equation} \label{a_priori_u}
    \begin{aligned}
      \Vert u \Vert_{L^s(D(A_r))}^s + \Vert \partial_t u \Vert_{L^s(H_r)}^s &\leq \left(\frac{c_1}{1-\alpha}\right)^s\left( \Vert u_0 \Vert_{D^s_r}^s + c_2 \left( \Vert u_0 \Vert_{H_r}^{3s(r-1)/(2r)} R_1^{s(r+3)/(2r)}\T^{(r-3)/(2r)}   \right.\right. \\
      &\hspace{2cm}  + R_1^{2s} \T^{3s(r-1)/(2r)-1} + R_2^s\T \\
      &\hspace{2.7cm} \left.\left.+  \Vert b \Vert_{L^s(0,T)}^s \left( 1 + \gamma\! \left( \Vert u_0 \Vert_{H_r} + R_1 \T^{1/s'} \right)\right)^s \right)\right).
    \end{aligned}
  \end{equation}
  The estimates for $\tau$ are obtained by the a priori estimates for the solution to problem~\eqref{linearised_equation_tau} (cf. Fern\'andez-Cara, Guill\'en, and Ortega~\cite[Lemma~10.3]{FCGO02}), so we have
  \begin{equation} \label{a_priori_tau}
    \Vert \tau \Vert_{L^\infty(W^{1,r})}\leq \left( \Vert \tau_0 \Vert_{W^{1,r}} + \frac{4\alpha}{c_3\mathrm{We}}\right)\exp\left(c_3\;\!R_1\T^{1/s'}\right) =: \Lambda
  \end{equation}
  and 
  \begin{equation} \label{a_priori_dt_tau}
    \Vert \partial_t \tau \Vert_{L^s(L^r)}\leq c_4 \;\!\Lambda \left(R_1+ \frac{\T^{1/s}}{c_3\mathrm{We}}\right)
  \end{equation}
  with $c_3,c_4>0$ depending on $a$, $r$, $s$, and $\Omega$. Due to the last three estimates, we can choose $T_*$ small enough and $R_1$, $R_2$, and $R_3$ large enough, respectively, such that $(\tilde{u},\tilde{\tau})\in \mathcal{Y}(T_*)$ implies $(u,\tau)\in \mathcal{Y}(T_*)$ for all $(u,\tau)\in \Phi(\tilde{u},\tilde{\tau})$, i.e., 
  \begin{equation*}
    \Phi(\mathcal{Y}(T_*))\subset \mathcal{Y}(T_*).
  \end{equation*}
  A possible choice for $T_*$, $R_1$, $R_2$, and $R_3$ is, e.g.,
  \begin{equation*}
    \begin{aligned}
      R_1^s &= \left(\frac{c_1}{1-\alpha}\right)^s \left(\Vert u_0 \Vert_{D^s_r}^s + c_2 \left( \Vert u_0 \Vert_{H_r}^{3s(r-1)/(2r)} +2 +\Vert b \Vert_{L^s(0,T)}^s \left( 1+ \gamma\! \left( \Vert u_0 \Vert_{H_r} +1\right)\right)^s\right)\right),\\
      R_2 &= \left( \Vert \tau_0 \Vert_{W^{1,r}} + \frac{4\alpha}{c_3\mathrm{We}}\right) \exp c_3, \\
      R_3 &= c_4\;\!R_2 \left(R_1+ \frac{1}{c_3\mathrm{We}\;\!R_2}\right),\\
      T_* &= \min\left( R_1^{-s(r+3)/(r-3)},\ R_1^{-4sr/(3s(r-1)-2r)},\ R_2^{-s},\ R_1^{-s'},\ T \right).
    \end{aligned}
  \end{equation*}
  In particular, $R_1$ and $R_2$ fulfil the necessary estimates~\eqref{estimate_R_1_R_2}.
    
  Finally, we have to show that $\Phi$ is closed, i.e., its graph is closed in $\mathcal{X}(T_*)\times \mathcal{X}(T_*)$. Let $(\tilde{u}_n, \tilde{\tau}_n,u_n,\tau_n)\subset \text{graph}\;\! \Phi$ with $(\tilde{u}_n, \tilde{\tau}_n,u_n,\tau_n)\to (\tilde{u},\tilde{\tau}, u,\tau)$ in $\mathcal{X}(T_*)\times \mathcal{X}(T_*)$, so in particular 
  \begin{equation} \label{strong_convergences}
    \begin{aligned}
      \tilde{u}_n\to \tilde{u}, ~~ u_n\to u \quad & \text{in } L^s(0,T_*;V_r),\\
      \tilde{\tau}_n\to \tilde{\tau}, ~~\tau_n\to\tau \quad &\text{in } \mathscr{C}([0,T_*];L^r).
    \end{aligned}
  \end{equation}
  
   We have to show $(\tilde{u},\tilde{\tau}, u,\tau)\in \text{graph}\;\!\Phi$, i.e., $(u,\tau)\in \Phi(\tilde{u}, \tilde{\tau})$. Due to $\Phi(\mathcal{Y}(T_*))\subset \mathcal{Y}(T_*)$, we know $(\tilde{u}_n, \tilde{\tau}_n,u_n,\tau_n)\in \mathcal{Y}(T_*)\times \mathcal{Y}(T_*)$. Since the spaces $L^s(0,T_*;D(A_r))$, $L^s(0,T_*;H_r)$, and $L^s(0,T_*;L^r)$ are reflexive Banach spaces and the space $L^\infty(0,T_*;W^{1,r})$ is the dual space of a separable normed space, the boundedness of $\mathcal{Y}(T_*)$ in $\mathcal{W}(T_*)$ implies that there exist subsequences (again denoted by $n$) and $v\in L^s(0,T_*;D(A_r))$, $w\in L^s(0,T_*;H_r)$, $\eta\in L^\infty(0,T_*;W^{1,r})$, and $\theta\in L^s(0,T_*;L^r)$ such that
  \begin{equation} \label{weak_convergences}
    \begin{aligned}
      u_n \wconv v\quad &\text{in } L^s(0,T_*;D(A_r)),\\
      \partial_t u_n\wconv w\quad & \text{in } L^s(0,T_*;H_r), \\
      \tau_n \wsconv \eta \quad &\text{in }  L^\infty(0,T_*;W^{1,r}), \\
      \partial_t \tau_n \wconv \theta \quad &\text{in } L^s(0,T_*;L^r),
    	\end{aligned}
  \end{equation}
  and the same for $\tilde{u}_n$ (with $\tilde{v}\in L^s(0,T_*;D(A_r))$) etc. It is easy to see that $w=\partial_t v$ and $\theta=\partial_t \eta$. Due to the uniqueness of the weak limit, it is also easy to see that $\tilde{v}=\tilde{u}$, $\tilde{\eta}=\tilde{\tau}$, $v=u$, and $\eta=\tau$.
  %Due to the compact embedding $\mathcal{W}(T_*)\subset^c \mathcal{X}(T_*)$ (see, e.g., Simon~\cite[Corollary~4, p.~85]{Simon}), these weak(*) convergences imply the strong convergence (up to a subsequence) of $(\tilde{u}_n,\tilde{\tau}_n)$ and $(u_n,\tau_n)$ in $\mathcal{X}(T_*)$, allowing us to identify the limits $\tilde{v}=\tilde{u}$, $\tilde{\eta}=\tilde{\tau}$, $v=u$, and $\eta=\tau$.
  
  Since $(\tilde{u}_n, \tilde{\tau}_n,u_n,\tau_n)\in \text{graph}\;\! \Phi$, $n\in \N$, there exists $f_n\in \mathcal{F}^s(\tilde{u}_n)$ such that
  \begin{equation} \label{problem_n}
    \left\{\begin{aligned}
      \text{Re}\;\! \partial_t u_n + (1-\alpha)\;\!A_r u_n &= P_r(-\text{Re}\;\! (\tilde{u}_n\cdot\nabla)\tilde{u}_n + \nabla\cdot \tilde{\tau}_n + f_n) \phantom{2 \alpha D(\tilde{u}_n)}\hspace{-.7cm}\text{in } (0,T_*),\\
      u_n(0)&=u_0, \\
      \text{We}\left( \partial_t \tau_n + (\tilde{u}_n\cdot\nabla)\tau_n + g_a(\tau_n,\nabla\tilde{u}_n)\right) + \tau_n &= 2\alpha D(\tilde{u}_n) \phantom{ P_r(-\text{Re}\;\! (\tilde{u}_n\cdot\nabla)\tilde{u}_n + \nabla\cdot \tilde{\tau}_n + f_n) } \hspace{-.7cm} \text{in } (0,T_*) ,\\
      \tau_n(0)&=\tau_0 .
    \end{aligned}\right.
  \end{equation}
  As derived in \eqref{estimate_f_2}, $f_n\in \mathcal{F}^s(\tilde{u}_n)$ and Assumption \textbf{(F3)} yield
  \begin{equation*}
    \Vert f_n \Vert_{L^s(L^r)}^s \leq \Vert b \Vert_{L^s(0,T_*)}^s \left( 1+ \gamma\! \left( \Vert u_0 \Vert_{H_r} + R_1 T_*^{1/s'} \right)\right)^s,
  \end{equation*}
  so the sequence $(f_n)\subset L^s(0,T_*;L^r)$ is bounded. Again, due to the reflexivity of $L^s(0,T_*;L^r)$, there exist a subsequence of the subsequence (again denoted by $n$) and $f\in L^s(0,T_*;L^r)$ such that 
  \begin{equation*}
    f_n\wconv f \quad \text{in } L^s(0,T_*;L^r).
  \end{equation*}
  Similarly to \eqref{estimate_f_2}, we can also derive the pointwise estimate
  \begin{equation*}
    \Vert f_n(t)\Vert_{L^r} \leq b(t) \left( 1+ \gamma\!\left(\Vert u_0\Vert_{H_r}+ R_1 T_*^{1/s'}\right)\right) =:\tilde{b}(t)
  \end{equation*}  
  for almost all $t\in(0,T_*)$. Therefore, we have $f_n(t)\in \bar{B}_{L^r}(0;\tilde{b}(t))$ for almost all $t\in(0,T_*)$, where $\bar{B}_{L^r}(0;\tilde{b}(t))$ denotes the closed ball in $L^r$ around $0$ of radius $\tilde{b}(t)$. This implies
  \begin{equation*}
    f(t)\in \overline{\text{co}}\left( \wlim\{f_n(t)\}\right)
  \end{equation*}
  for almost all $t\in (0,T_*)$, where $\wlim$ denotes the weak Kuratowski limit superior of a sequence of sets, i.e., for a sequence $(M_n)\subset 2^X$ of subsets of a Banach space $X$, we have
  \begin{equation*}
    \wlim M_n = \{x\in X\mid \exists (x_k) \subset X,~ x_k\in M_{n_k},~ k\in\N,~ x_k\wconv x \text{ in } X\},
  \end{equation*}  
  see, e.g., Papageorgiou~\cite[Theorem~3.1]{Papageorgiou87}. As we have $f_n(t)\in F(t,\tilde{u}_n(t))$ for almost all $t\in (0,T_*)$ and all $n\in\N$, we obtain
  \begin{equation} \label{f_in_co}
    f(t)\in \overline{\text{co}}\left( \wlim F(t,\tilde{u}_n(t))\right)
  \end{equation}  
  for almost all $t\in (0,T_*)$. Since $(\tilde{u}_n,\tilde{\tau}_n)\to (\tilde{u}, \tilde{\tau})$ in $\mathcal{X}(T_*)$ and in particular $\tilde{u}_n\to \tilde{u}$ in $L^s(0,T_*;V_r)$, we have, up to a subsequence, $\tilde{u}_n(t)\to \tilde{u}(t)$ in $V_r$ for almost all $t\in (0,T_*)$, see, e.g., Brezis~\cite[Theorem~4.9]{Brezis}. Now, Assumption~\textbf{(F2)} implies 
  \begin{equation*}
    \wlim F(t,\tilde{u}_n(t)) \subset F(t,\tilde{u}(t))
  \end{equation*}
  for almost all $t\in (0,T_*)$. As $F(t,\tilde{u}(t))$ is closed and convex, this yields, together with~\eqref{f_in_co},   
  \begin{equation} \label{f_in_F}
    f(t)\in F(t,\tilde{u}(t))
  \end{equation}
  for almost all $t\in(0,T_*)$.
  
  It remains to show that we can pass to the limit in \eqref{problem_n}, i.e., that $u$ and $\tau$ solve \eqref{linearised_equation_u_short} and \eqref{linearised_equation_tau}, respectively. Due to the weak convergences established in \eqref{weak_convergences}, we immediately obtain the convergences
  \begin{equation*}
    \begin{aligned}
      \partial_t u_n\wconv \partial_t u\quad & \text{in } L^s(0,T_*;H_r), \\
      \nabla\cdot \tilde{\tau}_n\wsconv \nabla\cdot\tilde{\tau} \quad & \text{in } L^\infty(0,T_*;L^r),\\
      \partial_t \tau_n \wconv \partial_t \tau \quad &\text{in } L^s(0,T_*;L^r),\\
      \tau_n \wsconv \tau \quad &\text{in }  L^\infty(0,T_*;W^{1,r}), \\
      D(\tilde{u}_n)\wconv D(\tilde{u}) \quad &\text{in } L^s(0,T_*;V_r).
    	\end{aligned}
  \end{equation*}
  We also have $A_r u_n\wconv A_r u$ in $L^s(0,T_*;H_r)$ since $u_n\wconv u$ in $L^s(0,T_*;D(A_r))$ and since $A_r\colon D(A_r)\to H_r$ is a linear, bounded operator and therefore weakly sequentially continuous, see, e.g., Zeidler~\cite[Proposition~21.81]{ZeidlerIIA}. In particular, all these convergences imply the weak convergence of the respective terms in $L^s(0,T_*;L^{r/2})$. Since we only want to show that $u$ and $\tau$ are a solution to~\eqref{linearised_equation_u} and~\eqref{linearised_equation_tau}, i.e., that the respective equations are fulfilled almost everywhere, it suffices to pass to the limit in~\eqref{problem_n} in $L^s(0,T_*;L^{r/2})$ (it would even be enough to pass to the limit in $L^1(0,T_*; L^1)$).
  
  First, we show the weak convergence $(\tilde{u}_n\cdot\nabla)\tilde{u}_n \wconv (\tilde{u}\cdot\nabla)\tilde{u}$ in $L^s(0,T_*;L^{r/2})$. Let $\varphi\in L^{s'}(0,T_*;L^{r/(r-2)})$. We have
  \begin{equation*}
    \begin{aligned}
      \langle (\tilde{u}_n\cdot\nabla)\tilde{u}_n -(\tilde{u}\cdot\nabla)\tilde{u},\varphi\rangle &= \langle (\tilde{u}_n\cdot\nabla)\tilde{u}_n -(\tilde{u}_n\cdot\nabla)\tilde{u},\varphi\rangle + \langle (\tilde{u}_n\cdot\nabla)\tilde{u}-(\tilde{u}\cdot\nabla)\tilde{u},\varphi\rangle \\
      &\leq \Vert \tilde{u}_n\Vert_{L^{\infty}(H_r)} \,\Vert \tilde{u}_n-\tilde{u}\Vert_{L^s(V_r)} \,\Vert \varphi\Vert_{L^{s'}(L^{r/(r-2)})} \\
      &\hspace{2.5cm} + \int_0^{T_*}\int_\Omega (\tilde{u}_n-\tilde{u})\cdot (\nabla \tilde{u}\, \varphi)\diff x\diff t. \\
    \end{aligned}
  \end{equation*}
  Due to the embeddings $\mathcal{U}(T_*)\subset W^{1,s}(0,T_*;H_r)\subset L^\infty(0,T_*;H_r)$, the norm $\Vert \tilde{u}_n\Vert_{L^{\infty}(H_r)}$ is bounded, so the (strong) convergence $\tilde{u}_n\to \tilde{u}$ in $L^s(0,T_*;V_r)$ (cf. \eqref{strong_convergences}) implies that the first term vanishes as $n\to\infty$. The second term vanishes as well since $\nabla \tilde{u} \,\varphi\in L^1(0,T_*;L^{r/(r-1)})$ and $\tilde{u}_n\wconv \tilde{u}$ in $\mathcal{U}(T_*)$ and thus $\tilde{u}_n\wsconv \tilde{u}$ in $L^\infty(0,T_*;H_r)$.

  Similarly, we show the weak convergence of $(\tilde{u}_n\cdot\nabla)\tau_n$ to $(\tilde{u}\cdot\nabla)\tau$ in $L^s(0,T_*;L^{r/2})$. Let $\varphi\in L^{s'}(0,T_*;L^{r/(r-2)})$. We obtain
  \begin{equation*}
    \begin{aligned}
      \langle (\tilde{u}_n\cdot\nabla)\tau_n -(\tilde{u}\cdot\nabla)\tau,\varphi\rangle &= \langle (\tilde{u}_n\cdot\nabla)\tau_n -(\tilde{u}\cdot\nabla)\tau_n,\varphi\rangle + \langle (\tilde{u}\cdot\nabla)\tau_n-(\tilde{u}\cdot\nabla)\tau,\varphi\rangle \\
      &\leq \Vert \tilde{u}_n-\tilde{u}\Vert_{L^{s}(H_r)} \,\Vert \tau_n\Vert_{L^\infty(W^{1,r})} \,\Vert \varphi\Vert_{L^{s'}(L^{r/(r-2)})} \\
      &\hspace{2.5cm}+ \int_0^{T_*}\int_\Omega \tilde{u}\, \nabla( \tau_n-\tau)\, \varphi\diff x\diff t.
    \end{aligned}
  \end{equation*}
  Again, due to the strong convergence $\tilde{u}_n\to \tilde{u}$ in $L^s(0,T_*;V_r)$ and the weak convergence and thus the boundedness of $(\tau_n)$ in $L^\infty(0,T_*;W^{1,r})$, the first term vanishes as $n\to\infty$. Also, $\tilde{u}\,\varphi\in L^1(0,T_*;L^{r/(r-1)})$ and $\tau_n\wsconv \tau$ in $L^\infty(0,T_*;W^{1,r})$ imply that the second term vanishes as well as $n\to\infty$.
  
  Next, we show the weak convergence $g_a(\tau_n,\nabla\tilde{u}_n)\wconv g_a(\tau,\nabla\tilde{u})$ in $L^s(0,T_*;L^{r/2})$. Since $g_a(\tau_n,\nabla\tilde{u}_n)$ is a linear combination of $\tau_n\, \nabla \tilde{u}_n$ and $\nabla \tilde{u}_n\, \tau_n$, it is sufficient to show the weak convergences of these two terms. Let $\varphi\in L^{s'}(0,T_*;L^{r/(r-2)})$. We have
  \begin{equation*}
    \begin{aligned}
      \langle\tau_n\, \nabla \tilde{u}_n-\tau\,\nabla\tilde{u}, \varphi\rangle & = \langle \tau_n\, (\nabla \tilde{u}_n - \nabla \tilde{u}),\varphi\rangle + \langle (\tau_n- \tau)\,\nabla\tilde{u},\varphi\rangle \\
      & \leq \Vert \tau_n\Vert_{L^\infty(L^r)} \,\Vert \tilde{u}_n-\tilde{u}\Vert_{L^s(V_r)} \, \Vert\varphi\Vert_{L^{s'}(L^{r/(r-2)})} + \int_0^{T_*} \int_\Omega (\tau_n-\tau) \, \nabla\tilde{u}\, \varphi \diff x \diff t.
    \end{aligned}
  \end{equation*}
  Similar to before, the weak convergence $\tau_n\wsconv \tau$ in $L^\infty(0,T_*;W^{1,r})$ implies the boundedness of the norm $\Vert \tau_n\Vert_{L^\infty(L^r)}$. Then, the strong convergence $\tilde{u}_n\to \tilde{u}$ in $L^s(0,T_*;V_r)$ implies that the first term vanishes as $n\to \infty$. The second term vanishes since $\nabla \tilde{u} \,\varphi\in L^1(0,T_*;L^{r/(r-1)})$ and, again, $\tau_n\wsconv \tau $ in $L^\infty(0,T_*;W^{1,r})$. Thus, $\tau_n\, \nabla \tilde{u}_n\wconv\tau\,\nabla\tilde{u}$ in $L^s(0,T_*;L^{r/2})$. Obviously, the weak convergence $\nabla\tilde{u}_n \,\tau_n \wconv \nabla\tilde{u}\,\tau$ in $L^s(0,T_*;L^{r/2})$ can be proven analogously.
  
  Finally, we have to show the convergences of the initial values. The strong convergence $\tau_n\to\tau $ in $\mathscr{C}([0,T_*];L^r)$ (cf.~\eqref{strong_convergences}) immediately implies $\tau_n(0)\to \tau(0)$ in $L^r$. To prove the convergence of $(u_n(0))$, let $\varphi\in \mathscr{C}^1([0,T_*];H_r^*)$. Then, the weak convergences proven before and integration by parts yield
  \begin{equation*}
    \begin{aligned}
      &\langle u_n(T_*), \varphi(T_*)\rangle - \langle u_n(0),\varphi(0)\rangle \\
      & = \int_0^{T_*} \langle \partial_t u_n(t),\varphi(t)\rangle\diff t + \int_0^{T_*} \langle u_n(t) , \partial_t \varphi(t)\rangle\diff t \\
      %& = \int_0^{T_*} \left\langle \frac{1}{\text{Re}}\left( P_r(-\text{Re}\;\! (\tilde{u}_n\cdot\nabla)\tilde{u}_n + \nabla\cdot \tilde{\tau}_n + f_n) - (1-\alpha) A_ru_n\right) , \varphi(t)\right\rangle + \int_0^{T_*} \langle u_n(t) , \partial_t \varphi(t)\rangle\diff t \\
      %& \rightarrow \int_0^{T_*} \left\langle \frac{1}{\text{Re}}\left( P_r(-\text{Re}\;\! (\tilde{u}\cdot\nabla)\tilde{u} + \nabla\cdot \tilde{\tau} + f) - (1-\alpha) A_ru\right) , \varphi(t)\right\rangle + \int_0^{T_*} \langle u(t) , \partial_t \varphi(t)\rangle\diff t \\
      & \rightarrow \int_0^{T_*} \langle \partial_t u(t),\varphi(t)\rangle\diff t + \int_0^{T_*} \langle u(t) , \partial_t \varphi(t)\rangle\diff t \\
      & = \langle u(T_*), \varphi(T_*)\rangle - \langle u(0),\varphi(0)\rangle
    \end{aligned}
  \end{equation*}
  as $n\to \infty$. Choosing $\varphi(t)=(1-\frac{t}{T_*})\, \sigma$ for an arbitrary $\sigma \in H_r^*$ implies 
  \begin{equation} \label{convergence_initial_value}
    \langle u_n(0),\sigma\rangle \to \langle u(0),\sigma\rangle
  \end{equation}
  for all $\sigma\in H_r^*$. Since we have $u_n(0)=u_0$ for all $n\in \N$, we obtain $u(0)=u_0$ in $H_r$.
%  Due to the density of the embedding
%  \begin{equation*}
%    D_r^s\subset H_r,
%  \end{equation*}
%  we also have a dense embedding for the respective dual spaces the other way around, i.e.,
%  \begin{equation*}
%    (H_r)^* \subset (D_r^s)^*.
%  \end{equation*}
%  Thus, the convergence~\eqref{convergence_initial_value} also holds for all $\sigma\in (D_r^s)^*$. Since we have $u_n(0)=u_0$ for all $n\in \N$, we obtain $u(0)=u_0$ in $D_r^s$.

  Overall, we have shown that $u$ and $\tau$ are a solution to \eqref{linearised_equation_u_short} and \eqref{linearised_equation_tau}, respectively, so together with \eqref{f_in_F}, we have $(u,\tau)\in \Phi(\tilde{u}, \tilde{\tau})$.\qed
\end{proof}

\section{Global existence for small data}

As in the single-valued case (cf. Fern\'andez-Cara, Guill\'en, and Ortega~\cite[Theorem~9.2]{FCGO02}), global existence of strong solutions can be obtained for small data. Since we cannot control $\Vert \tilde{u} \Vert_{L^\infty(H_r)}$ by choosing $T_*$ small enough anymore (cf.~estimate~\eqref{estimate_L_infty_H_r}), this requires a more specific growth condition on the set-valued right-hand side $F\colon [0,T]\times H_r \to \mathcal{P}_{fc}(L^r)$. We say that the assumptions {\textbf{(F')}} are fulfilled if
\begin{itemize}
  \item[\textbf{(F1)}] $F$ is measurable,
  \item[\textbf{(F2)}] for almost all $t\in(0,T)$, the graph of the mapping $v\mapsto F(t,v)$ is sequentially closed in $H_r\times L^r_w$, and
  \item[\textbf{(F3')}] $\vert{F(t,v)}\vert \leq b(t) \left(1+ \Vert{v}\Vert_{H_r}^{1+\varepsilon}\right)$ a.e. with $b\in L^s(0,T)$, $b\geq 0$ a.e. and $\varepsilon>0$.
\end{itemize}

With these assumptions, we can prove the following result.

\begin{theorem}
Let $\Omega\subset\R^3$ be open, bounded, and connected with $\partial\Omega\in \mathscr{C}^{2,\mu}$, $0<\mu<1$ and let $3<r<\infty$, $1<s<\infty$. Let $F\colon [0,T]\times H_r \to \mathcal{P}_{fc}(L^r)$ satisfy the assumptions \textbf{(F')}. Then, for each $T>0$, there exists an $\alpha_0\in (0,1)$ such that for all $\alpha\in (0,\alpha_0)$ and sufficiently small $u_0\in D^s_r$, $\tau_0\in W^{1,r}$ and $b\in L^s(0,T)$, there exist\footnote{The condition that $\alpha$ has to be chosen small enough means that the influence of Newtonian viscosity on the fluid flow has to be big enough. The Reynolds number Re and the Weissenberg number We can be chosen arbitrarily.}
  \begin{equation*}
    \begin{aligned}
      u&\in L^s(0,T; D(A_r)) \quad\text{with}\quad \partial_t u\in L^s(0,T; H_r),\\
      \tau &\in \mathscr C([0,T];W^{1,r})\quad\text{with}\quad \partial_t \tau\in L^s(0,T; L^r),\\
      p&\in L^s(0,T; W^{1,r}),\\
    \end{aligned}
  \end{equation*}
  such that $(u,\tau,p)$ is a solution to 
  \begin{equation} \label{problem_multivalued_global}
    \left\{\begin{aligned}
      \mathrm{Re} \left(\partial_t u + (u\cdot\nabla) u\right) -(1-\alpha)\Delta u - \nabla\cdot \tau+ \nabla p &\in F(\cdot ,u) \phantom{=02\alpha D(u)\tau_0f}\hspace{-1.5cm} \text{in } (0,T),\\
      \mathrm{We}\left( \partial_t \tau + (u\cdot\nabla)\tau + g_a(\tau,\nabla u)\right) + \tau &= 2\alpha D(u) \phantom{\in 0\tau_0fF(\cdot,u)}\hspace{-1.5cm} \text{in } (0,T),\\
      u(0)=u_0, \quad\quad \tau(0)&=\tau_0.
    \end{aligned}\right.
  \end{equation}    
\end{theorem}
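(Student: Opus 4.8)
The plan is to run the same fixed-point construction as in the proof of Theorem~\ref{thm_local}, but on the fixed interval $(0,T)$ rather than on a short interval $(0,T_*)$, replacing the smallness of $T_*$ used there by the smallness of the data and of $\alpha$. Concretely, I would define $\mathcal{U}(T)$, $\mathcal{T}(T)$, $\mathcal{W}(T)$, $\mathcal{Y}(T)$, and $\mathcal{X}(T)$ exactly as before with $T$ in place of $\T$, introduce the set-valued map $\Phi\colon\mathcal{Y}(T)\to\mathcal{P}_c(\mathcal{X}(T))$ through the linearised problems~\eqref{linearised_equation_u} and~\eqref{linearised_equation_tau}, and aim to produce a fixed point via the generalisation of Kakutani's theorem (Glicksberg~\cite{Glicksberg}, Fan~\cite{Fan}). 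Since \textbf{(F3')} is exactly \textbf{(F3)} with $\gamma(z)=z^{1+\varepsilon}$, and \textbf{(F1)}, \textbf{(F2)} are unchanged, every part of the previous proof except the self-mapping property transfers verbatim: the passage from a fixed point of $\Phi$ to a solution of~\eqref{problem_multivalued_global} through the Helmholtz decomposition, the nonemptiness and convexity of the values of $\Phi$, the convexity and compactness of $\mathcal{Y}(T)$ in $\mathcal{X}(T)$, and the closedness of $\Phi$ (including the measurable selection bound and the weak Kuratowski limit argument giving $f(t)\in F(t,\tilde u(t))$ a.e.).

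The one genuinely new step, and the main obstacle, is the invariance $\Phi(\mathcal{Y}(T))\subset\mathcal{Y}(T)$, which in the local proof was obtained by sending $T_*\to0$ and is no longer available here. I would keep the a priori estimates~\eqref{a_priori_u}, \eqref{a_priori_tau}, and~\eqref{a_priori_dt_tau}, now specialised to $\gamma(z)=z^{1+\varepsilon}$, and close them through smallness instead. For the velocity, \textbf{(F3')} together with~\eqref{estimate_L_infty_H_r} gives
\begin{equation*}
  \Vert f\Vert_{L^s(L^r)}^s\leq \Vert b\Vert_{L^s(0,T)}^s\bigl(1+(\Vert u_0\Vert_{H_r}+R_1 T^{1/s'})^{1+\varepsilon}\bigr)^s,
\end{equation*}
so that the feedback of $R_1$ into the right-hand side of~\eqref{a_priori_u} enters only through a \emph{super-linear} power $R_1^{(1+\varepsilon)s}$ carrying the factor $\Vert b\Vert_{L^s(0,T)}^s$. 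This is precisely what the exponent $\varepsilon>0$ buys: unlike the general monotone $\gamma$ of~\textbf{(F3)}, a polynomial super-linear growth bounds the $R_1$-dependence of the forcing by a power strictly above the linear one, so it is dominated for small $R_1$ and small $b$. Matching this against the remaining terms of~\eqref{a_priori_u}, all of which (for $R_1$ chosen proportional to $\Vert u_0\Vert_{D^s_r}$) scale like higher powers of the data, reduces the velocity part of the invariance to a single scalar inequality in $R_1$ solvable whenever $\Vert u_0\Vert_{D^s_r}$ and $\Vert b\Vert_{L^s(0,T)}$ are small enough.

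The delicate remaining point is the $u$--$\tau$ coupling, and this is where the smallness of $\alpha$ is forced. Estimate~\eqref{a_priori_tau} contains, besides the factor $\exp(c_3 R_1 T^{1/s'})$, which is now merely finite (and close to $1$ once $R_1$ is small) rather than negligible, a data-independent source $\tfrac{4\alpha}{c_3\mathrm{We}}$; since $\mathrm{We}$ is fixed, the only way to make $R_2$ small is to take $\alpha$ small together with $\Vert\tau_0\Vert_{W^{1,r}}$. Keeping $R_2$ small in turn controls the term $R_2^s T$ in~\eqref{a_priori_u} despite the fixed, possibly large, factor $T$, and then~\eqref{a_priori_dt_tau} fixes a small $R_3$. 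I would therefore first fix $\alpha_0\in(0,1)$ and thresholds on $\Vert u_0\Vert_{D^s_r}$, $\Vert\tau_0\Vert_{W^{1,r}}$, $\Vert b\Vert_{L^s(0,T)}$, then exhibit $R_1,R_2,R_3>0$ solving the coupled system of scalar inequalities coming from~\eqref{a_priori_u}--\eqref{a_priori_dt_tau} for every $\alpha<\alpha_0$ and all data below those thresholds; existence of such a triple is exactly the step that needs both the super-linear exponent $1+\varepsilon$ and small $\alpha$, the latter decoupling the elastic feedback $\nabla\cdot\tau\mapsto u$ from the viscous forcing $2\alpha D(u)\mapsto\tau$. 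With $\Phi(\mathcal{Y}(T))\subset\mathcal{Y}(T)$ established, the Kakutani--Fan--Glicksberg theorem applies as in Theorem~\ref{thm_local} and yields a fixed point, hence a solution of~\eqref{problem_multivalued_global} on the whole interval $(0,T)$.
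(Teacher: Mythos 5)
Your proposal takes essentially the same route as the paper's proof: the identical Kakutani--Fan--Glicksberg fixed-point framework on the full interval $(0,T)$, with the invariance $\Phi(\mathcal{Y}(T))\subset\mathcal{Y}(T)$ recovered by first choosing $R_1$ small so that the super-linear powers $R_1^{2s}$ and $R_1^{s(1+\varepsilon)}$ (this is exactly where \textbf{(F3')} with $\varepsilon>0$ is used) are dominated by $R_1^s$, then taking $R_2$ and $\alpha_0$ small to control the data-independent source $4\alpha/(c_3\mathrm{We})$ in the $\tau$-estimate and the coupling term $R_2^sT$ in the $u$-estimate, fixing $R_3$ from the $\partial_t\tau$-bound, and finally shrinking $u_0$, $\tau_0$, and $b$ --- precisely the paper's order of choices. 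Your identification of the two key mechanisms (the polynomial super-linear growth closing the $R_1$-inequality, and small $\alpha$ decoupling the $2\alpha D(u)$ forcing) matches the published argument, so the proposal is correct and essentially identical to the paper's proof.
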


\begin{proof}
  We use the same method as in the proof of Theorem~\ref{thm_local}. Let $\mathcal{Y}(T)$, $\mathcal{X}(T)$ and $\Phi$ be defined as before. As seen in the estimates~\eqref{estimate_R_1_R_2}, for arbitrary $\alpha$, $R_1$, and $R_2$, the initial values $u_0$ and $\tau_0$ can be chosen small enough such that $\mathcal{Y}(T)$ is nonempty. Analogously to the proof before, $\mathcal{Y}(T)$ is also convex and compact and $\Phi$ is well-defined. We now show that 
  \begin{equation*}
    \Phi(\mathcal{Y}(T))\subset \mathcal{Y}(T)
  \end{equation*}
  for $\alpha$, $u_0$, $\tau_0$, and $b$ sufficiently small (and $R_1$, $R_2$, and $R_3$ chosen appropriately). We again proceed similarly to the single-valued case. Let $(u,\tau)\in \Phi(\tilde{u}, \tilde{\tau})$ for an arbitrary $(\tilde{u}, \tilde{\tau})\in \mathcal{Y}(T)$ and let $f\in \mathcal{F}^s(\tilde{u})$ such that $u$ solves the single-valued problem~\eqref{linearised_equation_u_short} with $f$. As before, the a priori estimate for the solution to~\eqref{linearised_equation_u_short} yields (cf. Fern\'andez-Cara, Guill\'en, and Ortega~\cite[p.~575]{FCGO02})
  \begin{equation} \label{estimate_u_global}
    \Vert u\Vert_{L^s(D(A_r))}^s + \Vert \partial_t u\Vert_{L^s(H_r)}^s \leq \left(\frac{c_1}{1-\alpha}\right)^s \left( \Vert u_0\Vert_{D_r^s}^s \, c_5 \left( R_1^s \Vert u_0\Vert_{H_r}^s + R_1^{2s} T^{s-1} + R_2^s T + \Vert f\Vert_{L^s(L^r)}^s \right) \right)
  \end{equation}
  with the same constant $c_1$ as in the proof before and $c_5>0$ depending on Re, $r$, $s$, and $\Omega$. Using Assumption~\textbf{(F3')}, we have 
  \begin{equation*}
    \Vert f\Vert_{L^s(L^r)}^s \leq 2^{s-1} \int_0^T b(t)^s \left( 1 + \Vert \tilde{u}(t)\Vert_{H_r}^{s(1+\varepsilon)}\right) \diff t.
  \end{equation*}
  Again, the estimate~\eqref{estimate_L_infty_H_r} and $(\tilde{u}, \tilde{\tau})\in \mathcal{Y}(T)$ yield
  \begin{equation*}
    \begin{aligned}
      \Vert f\Vert_{L^s(L^r)}^s &\leq 2^{s-1} \Vert b \Vert_{L^s(0,T)}^s\left( 1 +\left( \Vert u_0 \Vert_{H_r} + R_1 T^{1/s'} \right)^{s(1+\varepsilon)}\right) \\
      &\leq 2^{s-1} \Vert b \Vert_{L^s(0,T)}^s \left( 1 +2^{s(1+\varepsilon)-1}\left( \Vert u_0 \Vert_{H_r}^{s(1+\varepsilon)} + R_1^{s(1+\varepsilon)} T^{(s-1)(1+\varepsilon)} \right)\right) \\
      &\leq c_6\;\!\Vert b \Vert_{L^s(0,T)}^s \left( 1 + \Vert u_0 \Vert_{H_r}^{s(1+\varepsilon)} +  R_1^{s(1+\varepsilon)} T^{(s-1)(1+\varepsilon)+1} \right)
    \end{aligned}
  \end{equation*}
  with $c_6=\max(2^{s-1},2^{s(1+\varepsilon)-1})$. Inserted in~\eqref{estimate_u_global}, we obtain
  \begin{equation*}
    \begin{aligned}
      \Vert u\Vert_{L^s(D(A_r))}^s + \Vert \partial_t u\Vert_{L^s(H_r)}^s &\leq \left(\frac{c_1}{1-\alpha}\right)^s \left( \Vert u_0\Vert_{D_r^s}^s + c_5 \left( R_1^s \Vert u_0\Vert_{H_r}^s + R_1^{2s} T^{s-1} + R_2^s T \right.\right.\\
      &\hspace{1cm}\left.\left. +  c_6\;\!\Vert b \Vert_{L^s(0,T)}^s \left( 1 + \Vert u_0 \Vert_{H_r}^{s(1+\varepsilon)} +  R_1^{s(1+\varepsilon)} T^{(s-1)(1+\varepsilon)+1} \right) \right) \right).
    \end{aligned}
  \end{equation*}
  For $\tau$, we have the same estimates as before (cf. \eqref{a_priori_tau} and \eqref{a_priori_dt_tau}), i.e.,
  \begin{equation*}
    \Vert \tau \Vert_{L^\infty(W^{1,r})}\leq \left( \Vert \tau_0 \Vert_{W^{1,r}} + \frac{4\alpha}{c_3\mathrm{We}}\right)\exp\left(c_3\;\!R_1T^{1/s'}\right) = \Lambda
  \end{equation*}
  and 
  \begin{equation*}
    \Vert \partial_t \tau \Vert_{L^s(L^r)}\leq c_4 \;\!\Lambda \left(R_1+ \frac{T^{1/s}}{c_3\mathrm{We}}\right).
  \end{equation*}
  Now, we can choose $\alpha$, $u_0$, $\tau_0$, and $b$ sufficiently small as well as $R_1$, $R_2$, and $R_3$ in such a way that the right-hand side of the last three inequalities can be estimated by $R_1^s$, $R_2$, and $R_3$, respectively: First, let $\alpha_1\in(0,1)$ be arbitrary. Now, choose $R_1$ small enough such that
  \begin{equation*}
    \left(\frac{c_1}{1-\alpha_1}\right)^s c_5 \left( R_1^{2s}T^{s-1} + c_6\,R_1^{s(1+\varepsilon)} T^{(s-1)(1+\varepsilon)+1} \right) < R_1^s,
  \end{equation*}
  e.g.,
  \begin{equation*}
    R_1 < \frac{1}{2}\min\left( \left( \left(\frac{c_1}{1-\alpha_1}\right)^{s} c_5 T^{s-1}\right)^{-1/s},\; \left(\left( \frac{c_1}{1-\alpha_1} \right)^{s} c_5c_6 \, T^{(s-1)(1+\varepsilon)+1}\right)^{-1/(s\varepsilon)} \right).
  \end{equation*}
  Next, choose $R_2$ small enough such that 
  \begin{equation*}
    \left(\frac{c_1}{1-\alpha_1}\right)^s c_5 \left( R_1^{2s}T^{s-1} + R_2^s T + c_6\,R_1^{s(1+\varepsilon)} T^{(s-1)(1+\varepsilon)+1} \right) < R_1^s,
  \end{equation*}
  choose $\alpha_0\in (0,\alpha_1]$ small enough such that
  \begin{equation*}
    \frac{4\alpha_0}{c_3\mathrm{We}}\exp\left(c_3\;\!R_1T^{1/s'}\right) < R_2,
  \end{equation*}
  and choose $R_3$ big enough such that
  \begin{equation*}
    c_4 \left(R_1+ \frac{T^{1/s}}{c_3\mathrm{We}}\right)\frac{4\alpha_0}{c_3\mathrm{We}}\exp\left(c_3\;\!R_1T^{1/s'}\right) < R_3.
  \end{equation*}
  Finally, choose $u_0$, $\tau_0$ and $b$ sufficiently small such that
  \begin{equation*}
    \begin{aligned}
      \left(\frac{c_1}{1-\alpha}\right)^s \left( \Vert u_0\Vert_{D_r^s}^s + c_5 \left( R_1^s \Vert u_0\Vert_{H_r}^s + R_1^{2s} T^{s-1} + R_2^s T \right.\right.\hspace{2cm}&\\
      \left.\left. +  c_6\;\!\Vert b \Vert_{L^s(0,T)}^s \left( 1 + \Vert u_0 \Vert_{H_r}^{s(1+\varepsilon)} +  R_1^{s(1+\varepsilon)} T^{(s-1)(1+\varepsilon)+1} \right) \right) \right) &\leq R_1^s,\\
      \left( \Vert \tau_0 \Vert_{W^{1,r}} + \frac{4\alpha}{c_3\mathrm{We}}\right)\exp\left(c_3\;\!R_1T^{1/s'}\right) &\leq R_2,\\
      c_4 \;\!\Lambda \left(R_1+ \frac{T^{1/s}}{c_3\mathrm{We}}\right) &\leq R_3,
    \end{aligned}
  \end{equation*}
  and thus $\Phi(\mathcal{Y}(T))\subset \mathcal{Y}(T)$. Finally, we can show that $\Phi$ is closed analogously to the proof before. Thus, we can now apply the generalisation of Kakutani's fixed-point theorem (see Glicksberg~\cite{Glicksberg} and Fan~\cite{Fan}) to obtain the existence of a fixed point and therefore a solution to problem~\eqref{problem_multivalued_global}.\qed
\end{proof}

%%%%%%%%%%%%%%%%%%%%%%%%%%%%%%%%%%%%%%%%%%%%%%%%%%%%%%%%

%%%%%%%%%%%%%%%%%%%%%%%%%%%%%%%%%%%%%%%%%%%%%%%%%%%%%%%%
%%%%%%%%%%%%%%%%%%%%%%%%%%%%%%%%%%%%%%%%%%%%%%%%%%%%%%%%

\end{document}